\newtheorem{theorem}{Theorem}[section]
\newtheorem{prop}[theorem]{Proposition}
\newtheorem{cor}[theorem]{Corollary}
\newtheorem{lem}[theorem]{Lemma}
\newtheorem*{cor*}{Corollary}
\newtheorem*{thm*}{Theorem}
\newtheorem*{lem*}{Lemma}
\newtheorem*{prop*}{Proposition}
\theoremstyle{definition}
\newtheorem{defn}[theorem]{Definition}
\newtheorem{example}[theorem]{Example}
\newtheorem*{defn*}{Definition}
\theoremstyle{remark}
\newtheorem{remark}[theorem]{Remark}
\newcommand{\pr}{\operatorname{Prob}}
\newcommand{\prg}{\pr(\Gamma)}
\newcommand{\Rad}{\operatorname{Rad}}
\DeclareRobustCommand{\SkipTocEntry}[5]{}
\crefname{chapter}{chapter}{chapters}
\Crefname{chapter}{Chapter}{Chapters}
\crefname{section}{section}{sections}
\Crefname{section}{Section}{Sections}
\crefname{subsection}{section}{sections}
\Crefname{subsection}{Section}{Sections}
\crefname{subsubsection}{section}{sections}
\Crefname{subsubsection}{Section}{Sections}
\crefname{figure}{figure}{figures}
\Crefname{figure}{Figure}{Figures}
\crefname{table}{table}{tables}
\Crefname{table}{Table}{Tables}
\crefname{theorem}{theorem}{theorems}
\Crefname{theorem}{Theorem}{Theorems}
\crefname{proposition}{proposition}{propositions}
\Crefname{proposition}{Proposition}{Propositions}
\crefname{corollary}{corollary}{corollaries}
\Crefname{corollary}{Corollary}{Corollaries}
\crefname{lemma}{lemma}{lemmas}
\Crefname{lemma}{Lemma}{Lemmas}
\crefname{definition}{definition}{definitions}
\Crefname{definition}{Definition}{Definitions}
\crefname{conjecture}{conjecture}{conjectures}
\Crefname{conjecture}{Conjecture}{Conjectures}
\crefname{example}{example}{examples}
\Crefname{example}{Example}{Examples}
\crefname{remark}{remark}{remarks}
\Crefname{remark}{Remark}{Remarks}
\title[Faithful Actions]{Faithful actions on Generalized Furstenberg boundary}
\author{Farid Behrouzi}
\address{Farid Behrouzi\newline\hspace*{1em} Department of Pure Mathematics, Faculty of Mathematical Sciences, Alzahra University, Tehran, Iran}
\email{f\_behrouzi@alzahra.ac.ir}
\author{Zahra Naghavi}
\address{Zahra Naghavi\newline\hspace*{1em} Department of Pure Mathematics, Faculty of Mathematical Sciences, Alzahra University, Tehran, Iran\newline\hspace*{1em}School of Mathematics, Institute for Research in Fundamental Sciences (IPM), P.O. Box: 19395-5746, Tehran, Iran}
\email{z.naghavi@alzahra.ac.ir, naghavi.zahra@gmail.com}
\subjclass[2010]{37A55, 46L55, 47L65} \keywords{countable discrete group, $C^*$-algebra, crossed product, minimal space, unique trace, Powers' averaging property, Furstenberg boundary.\newline\hspace*{1em} Second auther partly supported by grant from IPM (No. 1400460031)}
\begin{document}
\begin{abstract}
Let $G$ be a countable discrete group that act minimally on a compact Hausdorff space $X$ by homeomorphisms. Our goal is to establish the equivalence between the faithfulness of the action of $G$ on the generalized Furstenberg boundary $\partial_F(G, X)$ and a weakened version of the generalized Powers' averaging property. This result provides valuable insights into the state space of the crossed product $C(X)\rtimes_{r}G$.
\end{abstract}
\maketitle
\section{Introduction}

In 1958, Kaplansky \cite{Kap} posed the question of whether a simple projectionless $C^*$-algebra exists. Interestingly, this question had been discussed in a conversation with Kadison about ten years earlier. Following Kadison's suggestion, Powers proved in 1968 that the reduced $C^*$-algebra of the free group on two generators, denoted as $C^{*}_{r}(\mathbb{F}_{2})$, is simple. However, Powers published this result seven years later \cite{Po}. It wasn't until 1982 that Pimsner and Voiculescu demonstrated that $C^{*}_{r}(\mathbb{F}_{2})$ is projectionless \cite{PV}. Additionally, Powers' technique showed that $C^{*}_{r}(\mathbb{F}_{2})$ possesses a unique trace. For more historical information, please refer to \cite{V} and \cite{H}.

A group $G$ is said to be $C^*$-simple if its reduced group $C^*$-algebra $C^{*}_{r}(G)$ is a simple algebra. On the other hand, the unique trace property refers to the condition where $C^{*}_{r}(G)$ possesses a unique trace. Following Powers' influential paper, many researchers attempted to characterize $C^*$-simple groups and determine whether $C^*$-simplicity is equivalent to the unique trace property. Finally, after nearly 40 years, Kalantar and Kennedy provided a comprehensive characterization of all discrete $C^*$-simple groups in their important paper \cite{KK}. Their work established the equivalence between $C^*$-simplicity and the freeness of the action $G\curvearrowright\partial_{F}G$, where $\partial_{F}G$ represents the universal Furstenberg boundary. Subsequently, in \cite{BKKO}, it was demonstrated that $G$ possesses the unique trace property if and only if the action $G\curvearrowright\partial_{F}G$ is faithful. These findings indicated that having the unique trace property is a weaker condition compared to $C^*$-simplicity. The crux of the proof lies in the fact that $\ker(G\curvearrowright\partial_{F}G)=\Rad(G)$ \cite{Fur}, where $\Rad(G)$ denotes the largest normal amenable subgroup of $G$, known as the amenable radical. Notably, the authors of both papers did not adopt Powers' method of proof.

Now, let us delve into Powers' technique, known as Powers' averaging property. This property can be described in terms of the set of all finitely supported probability measures denoted by $\pr_{f}(G)$. It should be noted that $\pr_{f}(G)$ is a subsemigroup of the set of all probability measures $\pr(G)$. Let $\tau_{\lambda}$ denote the canonical trace on $C^{*}_{r}(G)$. We say that a group $G$ has the Powers' averaging property if, for any $a\in C^{*}_{r}(G)$ with $\tau_{\lambda}(a)=0$, the following holds:
$$0\in\overline{\{\mu a : \mu\in \pr_{f}(G)\}}.$$
To understand this statement, consider that if $G$ acts on any $C^*$-algebra $A$, there is always an action of $\pr(G)$ on $A$ given by
$$\mu a=\sum_{t\in G}\mu(t)(t.a), \quad (\mu\in\pr(G),\ a\in A).$$

Kennedy \cite{Ken} and Haagerup \cite{Ha} independently established that $C^*$-simplicity is equivalent to the Powers' averaging property. Furthermore, Haagerup showed that $G$ possesses the unique trace property precisely when for all $t\in G\setminus\{e\}$
$$0\in\overline{\{\mu\lambda_{t} : \mu\in \pr_{f}(G)\}},$$
where $\lambda : G\to B(\ell^{2}(G))$ denotes the left regular representation. Notably, this condition is weaker than the Powers' averaging property.

In the context of a discrete group $G$ acting on a compact Hausdorff space $X$ by homeomorphisms, Amrutam and Ursu \cite{AU} have recently generalized the concept of Powers' averaging property by introducing the notion of "$C(X)$-valued probability measures." They define the generalized Powers' averaging property for $C(X)\rtimes_{r}G$ as follows: for every $a\in C(X)\rtimes_{r}G$ with $\mathbb{E}(a)=0$, where $\mathbb{E} : C(X)\rtimes_{r}G\to C(X)$ represents the canonical conditional expectation,
$$0\in\overline{\{\mu a : \mu\in \pr_{f}(G, C(X))\}},$$
where $\pr_{f}(G, C(X))$ denotes the set of finitely supported $C(X)$-valued probability measures.

Furthermore, they establish that for a minimal $G$-space $X$, the $C^*$-algebra $C(X)\rtimes_{r}G$ is simple if and only if it satisfies the generalized Powers' averaging property.

Although the introduction of $C(X)$-valued probability measures in the paper by Amrutam and Ursu is not wrong, it is confusing and not easy to work. In our paper, we provide an accurate definition of $C(X)$-valued probability measures. We also demonstrate that $\ker(G\curvearrowright\partial_{F}(G, X))=\{e\}$ precisely when for all $t\in G\setminus\{e\}$
$$0\in\overline{\{\mu\lambda_{t} : \mu\in \pr_{f}(G, C(X))}\},$$
which extends Haagerup's result on the unique trace property of $G$.

In addition to this introduction, the paper consists of three other sections. In Section 2, we provide the necessary background. In Section 3, we define $C(X)$-valued measures and establish their precise form. In Section 4, we prove that $\ker(G\curvearrowright\partial_{F}(G, X))=\Rad(G)\cap\ker(G\curvearrowright X)$. Then we establish the connection between the faithfulness of the action $G\curvearrowright\partial_{F}(G, X)$ and a weakened version of the generalized Powers' averaging property.

\subsection*{Acknowledgements}
The authors are grateful to Tattwamasi Amrutam for many helpful discussions and suggestions during the completion of this paper. The authors would also like to thank the anonymous referee for the detailed reading of our paper and for their comments and suggestions which enhanced the exposition of the paper.

\section{Preliminaries}
In this paper, we specifically examine a countable discrete group $G$ and its actions on compact Hausdorff spaces. A compact Hausdorff space $X$ is referred to as a $G$-space if it admits a group homomorphism from $G$ into the group of homeomorphisms of $X$. This group action is denoted by $G\curvearrowright X$, indicating that $G$ acts on $X$ by homeomorphisms.

Similarly, a group $G$ acts on a $C^*$-algebra $A$ by $*$-automorphisms if there exists a group homomorphism from $G$ into the group of $*$-automorphisms of $A$. For instance, the action $G\curvearrowright X$ induces an action of $G$ on the commutative $C^*$-algebra $C(X)$ given by
$$(t\cdot f)(x) = f(t^{-1}x), \quad (t\in G,\ f\in C(X),\ x\in X).$$

A linear functional $\phi$ on a $C^*$-algebra $A$ is considered a state if it is unital and positive, meaning that $\phi(1) = 1$ and $\phi(a) \geq 0$ for all $a\in A$ with $a\geq 0$. The set of all states of $A$ is denoted by $S(A)$. A state $\phi$ of $A$ is called a tracial  state if, for all $a, b\in A$, $\phi(ab) = \phi(ba)$.

If $G$ acts on a $C^*$-algebra $A$, then the state space $S(A)$ becomes a $G$-space under the action defined as
$$(t\cdot \phi)(a) = \phi(t^{-1}a), \quad (t\in G,\ a\in A,\ \phi\in S(A)).$$
For example, the action $G\curvearrowright X$ induces an action $G\curvearrowright \pr(X)$, where $\pr(X)$ is the space of all probability measures on $X$, since $\pr(X)$ can be identified with the state space $S(C(X))$.

The term minimal applies to a $G$-action on a space $X$ when the $G$-orbit $Gx$ is dense in $X$ for every $x\in X$.
On the other hand, a $G$-action on $X$ is considered strongly proximal if, for any probability measure $\mu\in\pr(X)$, the weak*-closure of the orbit $G\mu$ contains a Dirac measure $\delta_{x}$ for some $x\in X$. A boundary of a $G$-space $X$ refers to a space that satisfies both the minimal and strongly proximal properties. In other words, a boundary is a $G$-space where every point can be arbitrarily approximated by elements of its $G$-orbit, and for any probability measure on $X$, there exists a point that can be approximated by averaging over the $G$-orbit of that measure.

Furstenberg's work \cite{F} establishes the existence of a unique universal boundary for every group $G$, denoted as $\partial_{F}G$ and referred to as the Furstenberg boundary of $G$. This means that any boundary $X$ can be continuously and equivariantly mapped onto $\partial_{F}G$. Furthermore, in the article \cite{KK}, it is proven that the Furstenberg boundary $\partial_{F}G$ can be identified with the $G$-injective envelope of the complex numbers $\mathbb{C}$, denoted as $\mathrm{I}_{G}(\mathbb{C})$.


Consider a $G$-equivariant map $\varphi: Y\to X$ between $G$-spaces $X$ and $Y$. If $\varphi$ is surjective, we call the pair $(Y, \varphi)$ an extension of $X$. An extension $(Y, \varphi)$ of $X$ is said to be minimal if $Y$ is a minimal space. Furthermore, it is termed a strongly proximal extension if every probability measure $\mu\in\pr(Y)$ with support contained in $\varphi^{-1}(x)$ for some $x\in X$ is strongly proximal. We define a $(G, X)$-boundary as a minimal strongly proximal extension of $X$. Notably, the spectrum of the $G$-injective envelope of $C(X)$, denoted as $\mathrm{I}_{G}(C(X))$, serves as a $(G, X)$-boundary. We denote this unique $G$-space (up to homeomorphism) by $\partial_{F}(G, X)$. For further details, see \cite{Naghavi}.

Now, let $\lambda$ denote the left regular representation of $G$ on the Hilbert space $\ell^{2}(G)$. The reduced $C^*$-algebra $C^*_r(G)$ is defined as follows:
$$C^*_r(G)=\overline{\mathrm{span}\{\lambda_{t}: t\in G\}}^{\vert\vert~\cdot~\vert\vert}.$$

For a $C^*$-algebra $A$, the reduced crossed product of an action $G\curvearrowright A$, denoted by $A\rtimes_{r}G$, is defined as the norm closure of the image of the regular representation $C_{c}(G, A)\to B(H\otimes\ell^{2}(G))$, where $C_{c}(G, A)$ is the linear space of finitely supported functions on $G$ with values in $A$. An element in $C_{c}(G, A)$ is typically represented as a finite sum $x=\sum_{s\in G}a_{s}\lambda_{s}$, where $a_s\in A$ and $\lambda_s$ denotes the left regular representation of $G$. It is worth noting that when $A=\mathbb{C}$, the reduced crossed product $\mathbb{C}\rtimes_{r}G$ can be identified with the reduced $C^*$-algebra $C^{*}_{r}(G)$. The action of $G$ on $A\rtimes_{r}G$ is inner, meaning that for $t\in G$ and $a\in A\rtimes_{r}G$, the action is given by $ta=\lambda_{t}a\lambda_{t}^{-1}$.

Consider an inclusion of $C^*$-algebras $B\subseteq A$. A conditional expectation from $A$ onto $B$ is a completely positive contractive projection $\mathbb{E}: A\to B$ that satisfies $\mathbb{E}(bxb')=b\mathbb{E}(x)b'$ for every $x\in A$ and $b, b'\in B$. In the case of the reduced crossed product, there is always a conditional expectation $\mathbb{E}: A\rtimes_{r}G\to A$ defined as $\mathbb{E}\left(\sum a_{s}\lambda_{s}\right)=a_{e}$.  For more information on crossed products, completely positive maps, and conditional expectations, we recommend referring to \cite{BroOza08}.

\section{The space of $C(X)$-valued probability measures}
A probability measure $\mu\in\prg$ is a positive measure with $\mu(G)=1$. It is worth noting that any $\mu\in\prg$ can be expressed as a sum $\sum_{s\in G}\varepsilon_{s}\delta_{s}$, converges in weak* topology, where $\varepsilon_{s}\geq 0$, $\sum_{s\in G}\varepsilon_{s}=1$, and $\delta_{s}$ denotes the Dirac measure on $s$. This representation allows for a concise description of probability measures on $G$. The objective of this section is to generalize this notion and extend the concept to further settings or structures.
\begin{defn}\label{def11}
Suppose $G$ is a countable discrete group, and $X$ is a compact Hausdorff space. A map $\mu : G\to C(X)$ is called a $C(X)$-valued probability measure on $G$ if it satisfies the following properties:
\begin{itemize}
\item[(i)] $\mu$ is positive, meaning that for every $t\in G$, $\mu(t)\geq 0$.
\item[(ii)] The series $\sum_{t\in G}\mu(t)$ uniformly converges to the constant function $1_{C(X)}$, where $1_{C(X)}$ is the function that takes the value $1$ at every point in $X$.
\end{itemize}
\end{defn}

We use the notation $\pr(G, C(X))$ to refer to the set of all $C(X)$-valued probability measures on $G$, and $\pr_f(G, C(X))$ represents the set of all finitely supported $C(X)$-valued probability measures on $G$.

\begin{remark}
	The $C(X)$-valued probability measures on $G$, as defined in \ref{def11}, can be seen as positive elements of the Banach algebra $\ell^1(G, C(X))$ with norm 1. It is easy to observe that $\pr(G, C(X))$ forms a semigroup under convolution multiplication.
\end{remark}
\begin{remark}
	As mentioned in the introduction, Amrutam and Ursu introduced $C(X)$-valued  probability measures in \cite{AU} as formal sum:
	
	\begin{equation}\label{eq11}
	\sum_{s\in G}\sum_{s\in I_s} f_i sf_i
	\end{equation}
	
	where $I_s$'s are pairwise disjoint sets, satisfying the property that $f_i \geq 0$ and $\sum_{s\in G}\sum_{s\in I_s} f_i ^2=1$. The relationship between this definition and our definition can be expressed through a surjection map that maps a formal sum $\sum_{s\in G}\sum_{s\in I_s} f_i sf_i$ to a function $\mu:G\to C(X)$, where $\mu(s)=\sum_{i\in I_s} f_i^2$. Essentially, a summation like (\ref{eq11}) can be regarded as a function assigning each element $s\in G$ a subset of positive elements $C(X)$, denoted by $L_s$, such that
	
	\[
	\sum_{s\in G}\sum_{f_i\in L_s} f_i^2 = 1.
	\]
\end{remark}

\begin{example}
Suppose $\xi\in\pr(G)$. We define $\mu_{\xi}: G\to C(X)$ as $\mu_{\xi}(t) = \xi(t) \cdot 1_{C(X)}$. It is clear that $\mu_{\xi}\geq 0$ since $\xi(t)\geq 0$ for all $t\in G$, and the function $1_{C(X)}$ is non-negative. Moreover, for any $x\in X$, we have:
$$
\sum_{t\in G}\mu_{\xi}(t)(x) = \sum_{t\in G}\xi(t)\cdot 1_{C(X)}(x) = \sum_{t\in G}\xi(t) = 1,
$$
where the last equality holds because $\xi$ is a probability measure on $G$.

This shows that under the map $\xi\mapsto\mu_{\xi}$, the set $\pr(G)$ can be naturally embedded as a subset of $\pr(G, C(X))$.  In the special case where there exists $s\in G$ such that $\xi=\delta_{s}$, the Dirac delta measure concentrated at $s$, we denote $\mu_{\xi}$ as $\Delta_{s}$. In this case, $\Delta_{s}(t) = \delta_{s}(t)\cdot 1_{C(X)} $ for all $t\in G$.
\end{example}
\begin{lem}
A $C(X)$-valued probability measure $\mu$ on $G$ belongs to $\pr(G, C(X))$ if and only if it can be expressed in the form $\mu = \sum_{s\in G} f_s \Delta_s$, where $0\leq f_s\in C(X)$ for each $s\in G$, and the series $\sum_{s\in G} f_s$ converges uniformly to the constant function $1_{C(X)}$.
\end{lem}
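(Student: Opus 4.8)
The plan is to read this lemma as a dictionary between the function-theoretic Definition~\ref{def11} and a ``series'' presentation; accordingly the proof will consist essentially of unwinding the definitions, the only point needing care being the sense in which $\sum_{s\in G}f_s\Delta_s$ is said to converge. The key elementary observation is that, by definition, $\Delta_s(t)=\delta_s(t)\,1_{C(X)}$, so that for any family $(f_s)_{s\in G}$ in $C(X)$ the ``sum'' $\sum_{s\in G}f_s\Delta_s$, evaluated at a fixed $t\in G$, reduces to the single term $f_t$. In other words, as a map $G\to C(X)$, $\sum_{s\in G}f_s\Delta_s$ is simply $t\mapsto f_t$.

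For the forward implication, given $\mu\in\pr(G,C(X))$ I would set $f_s\defeq\mu(s)\in C(X)$ for each $s\in G$. Property~(i) of Definition~\ref{def11} gives $f_s\geq 0$, and property~(ii) states exactly that $\sum_{s\in G}f_s$ converges uniformly to $1_{C(X)}$. By the observation above, $\sum_{s\in G}f_s\Delta_s$ and $\mu$ coincide as functions on $G$, so $\mu=\sum_{s\in G}f_s\Delta_s$ is of the required form. For the reverse implication I would run the argument backwards: if $\mu=\sum_{s\in G}f_s\Delta_s$ with $0\leq f_s\in C(X)$ and $\sum_{s\in G}f_s\to 1_{C(X)}$ uniformly, then evaluating at $t\in G$ gives $\mu(t)=f_t\geq 0$, which is~(i), while the hypothesis on the partial sums is literally~(ii); hence $\mu\in\pr(G,C(X))$.

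The one point requiring care is to make precise in which topology $\sum_{s\in G}f_s\Delta_s$ converges, so that ``$\mu$ can be expressed in the form $\sum_{s\in G}f_s\Delta_s$'' is unambiguous and consistent with regarding $\pr(G,C(X))$ as a subset of $\ell^{1}(G,C(X))$. The natural choice is convergence of the net of finite partial sums $\bigl(\sum_{s\in F}f_s\Delta_s\bigr)_{F}$ (indexed by the finite subsets $F\subseteq G$) to $\mu$, and the governing estimate is
\[
\Bigl\|\,\mu-\sum_{s\in F}f_s\Delta_s\,\Bigr\|
=\sup_{x\in X}\Bigl(1-\sum_{s\in F}f_s(x)\Bigr),
\]
whose right-hand side tends to $0$ as $F$ increases to $G$ precisely by property~(ii); the same estimate, read in reverse, certifies that any $\mu$ of the displayed form is indeed a $C(X)$-valued probability measure. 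I do not anticipate any genuine obstacle beyond being scrupulous here --- in particular beyond keeping in mind that $\sum_{s\in G}\lVert f_s\rVert_{\infty}$ need not be finite, so that the relevant norm on $\ell^{1}(G,C(X))$ is the one for which every such $\mu$ has norm $1$, namely $\mu\mapsto\sup_{x\in X}\sum_{s\in G}\mu(s)(x)$.
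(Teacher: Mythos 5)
Your proposal is correct and matches the paper's argument: the paper simply sets $f_s=\mu(s)$ for the forward direction and notes the converse is immediate, exactly as you do. Your extra care about the topology of convergence of $\sum_{s\in G}f_s\Delta_s$ is a reasonable elaboration of the same unwinding of Definition~\ref{def11}, not a different route.
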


\begin{proof}
The backward direction is obvious. For the forward direction, let $f_{s}=\mu(s)$.
\end{proof}

Given $f, g\in C(X)$, we define $f\mu: G\to C(X)$ by $(f\mu)(t)=f\cdot\mu(t)$, and $\mu g: G\to C(X)$ by $(\mu g)(t)=\mu(t)\cdot g$. It follows that $f\mu g=fg\mu$. This shows that $\pr(G, C(X))$ is $C(X)$-convex, meaning that for any finitely many $f_{1},\ldots, f_{n}\in C(X)$ with $\sum_{i=1}^{n}f_{i}^{*}f_{i}=1$, and any $\mu_{1},\ldots, \mu_{n}\in\pr(G, C(X))$, we have $\sum_{i=1}^{n}f_{i}^{*}\mu_{i}f_{i}\in\pr(G, C(X))$.



\begin{remark}

Consider an inclusion $C(X) \subseteq A$ of unital $G$-$C^*$ algebras. In \cite{AU}, an action of a formal sum $\sum_{s\in G}\sum_{i\in I_s} f_i s f_i$ on $A$ is defined as follows:
\[
\mu \cdot a = \sum_{t\in G}\sum_{i\in I_s} f_i(s_i \cdot a)f_i.
\]
The authors claim that for a fixed $a\in A$, the set of all $\mu \cdot a$, where $\mu$ ranges over all formal sums, is $C(X)$-convex. However, $C(X)$-convexity is only retained as a property of the set and is not utilized throughout the paper.

For $\mu \in \text{Pr}(G,C(X))$, the corresponding formal sum is given by
\[
\sum_{t\in G} \sqrt{\mu(t)}~ t~ \sqrt{\mu(t)}.
\]
Hence, the corresponding action is defined as follows:
\[
\mu \cdot a = \sum_{t\in G} \sqrt{\mu(t)}(t \cdot a)\sqrt{\mu(t)}.
\]

We do not utilize this action throughout our paper. Instead, we employ the  following action:
\end{remark}
\begin{defn}\label{defn12}
Let $X$ be a compact Hausdorff space. Assume $C(X)\subseteq A$ is an inclusion of uintal $G$-$C^*$ algebras. For $\mu\in \pr(G, C(X))$ and $a\in $A, define
\[\mu.a=\sum_{t\in G}\mu(t) (t.a).\]
Clearly, the map $a\mapsto \mu.a$ is uintal an completely positive.
\end{defn}



It is important to note that for a $C^*$-algebra $A$, an $A$-valued measure can be defined on ( the Borel $\sigma$-algebra of ) any locally compact space. However, it is worth emphasizing that the definition and proof techniques for $A$-valued measures  on locally compact spaces differ significantly from those used for $A$-valued measures on discrete or compact spaces. As a result, we have chosen to discuss the notion of $A$-valued measures on locally compact spaces in a separate paper dedicated specifically to this topic.

\section{$\text{Ker}\left(G\curvearrowright\partial_F(G,X)\right)$ and Generalization of \cite[Theorem 5.1]{Ha}}
In the preliminaries, it was mentioned that $\text{Ker}(G\curvearrowright\partial_{F}G)=\text{Rad}(G)$, and the faithfulness of the action $G\curvearrowright\partial_{F}G$ is closely related to the existence of a unique trace on $C^{*}_{r}(G)$. In this section, we aim to extend and build upon these results.
\begin{prop}
\thlabel{amenradandker}
Let $X$ be a minimal $G$-space. Then,
\[\text{Ker}(G\curvearrowright\partial_F(G,X))=\text{Ker}(G\curvearrowright X)\cap \text{Rad}(G).\]
\begin{proof}

Let $\varphi:\partial_F(G,X)\to X$ denote the canonical quotient map. We can identify $C(X)$ as a subspace of $C(\partial_F(G,X))$ via $\varphi$. Consider an element $s \in \text{Ker}(G\curvearrowright\partial_F(G,X))$. This implies that $s\varphi(x)=\varphi(x)$ for all $x \in \partial_F(G,X)$. Since $\varphi$ is surjective, we can conclude that $s$ belongs to $\text{Ker}(G\curvearrowright X)$. Furthermore, it is known that for every  $x\in \partial_{F}(G, X)$, the stabilizer subgroup $G_x$ is amenable \cite[Proposition 3.3]{Kawabe}, and amenability is preserved under taking subgroups. Thus, we have that $\text{Ker}(G\curvearrowright\partial_F(G,X))\subseteq G_x$ is also amenable. Moreover, as $\text{Ker}(G\curvearrowright\partial_F(G,X))$ is a normal subgroup, we conclude that $\text{Ker}(G\curvearrowright\partial_F(G,X))\subseteq \text{Ker}(G\curvearrowright X)\cap \text{Rad}(G)$.

To show the other direction, let $x\in X$, and consider the set $\varphi^{-1}(x)$, which is $\text{Ker}(G\curvearrowright X)$-invariant. Let $\Lambda=\text{Ker}(G\curvearrowright X)\cap \text{Rad}(G)$. Since $\Lambda$ is an amenable subgroup of $\text{Ker}(G\curvearrowright X)$, it fixes a measure $\nu\in \text{Prob}(\varphi^{-1}(x))$. Moreover, as $\Lambda$ is a normal subgroup of $G$, we have $tg\nu=g\nu$ for all $t\in\Lambda$ and $g\in G$.

Consider $\nu$ as a measure on $\partial_F(G,X)$ and note that $\mathrm{supp}(\nu)\subseteq\varphi^{-1}(x)$. By utilizing \cite[Theorem 3.2]{Naghavi}, we have $t\delta_{y}=\delta_{y}$ for any $y\in\varphi^{-1}(x)$. So $t\delta_{y}=\delta_{y}$ for some $y\in\partial_{F}(G, X)$. Since $\partial_F(G,X)$ is minimal, we can conclude that $t\delta_{y}=\delta_{y}$ for all $y\in\partial_F(G,X)$. This completes the proof.
\end{proof}
\end{prop}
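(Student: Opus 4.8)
The plan is to prove the two inclusions of the asserted equality separately. Both directions use the canonical quotient map $\varphi\colon\partial_F(G,X)\to X$, which is surjective and $G$-equivariant, together with the fact that $\partial_F(G,X)$ is a minimal strongly proximal extension of $X$; in particular $\partial_F(G,X)$ is itself a minimal $G$-space, and this is where the hypothesis that $X$ is minimal is really used (it guarantees that $\partial_F(G,X)$ is a genuine $(G,X)$-boundary). For the inclusion $\subseteq$ I would argue: if $s$ acts trivially on $\partial_F(G,X)$, then for each $x\in X$ pick $y\in\varphi^{-1}(x)$ (nonempty by surjectivity) and compute $sx=s\varphi(y)=\varphi(sy)=\varphi(y)=x$, so $s\in\text{Ker}(G\curvearrowright X)$; moreover $\text{Ker}(G\curvearrowright\partial_F(G,X))$ is a normal subgroup sitting inside every point stabilizer $G_y$, $y\in\partial_F(G,X)$, and these stabilizers are amenable on a $(G,X)$-boundary by \cite[Proposition 3.3]{Kawabe}, hence the kernel is a normal amenable subgroup and therefore lies in $\text{Rad}(G)$.

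For the inclusion $\supseteq$ I would set $\Lambda=\text{Ker}(G\curvearrowright X)\cap\text{Rad}(G)$, a normal amenable subgroup of $G$. Fix $x\in X$; the fiber $\varphi^{-1}(x)$ is nonempty, compact, and invariant under $\text{Ker}(G\curvearrowright X)$, hence under $\Lambda$, so amenability of $\Lambda$ gives a $\Lambda$-fixed measure $\nu\in\text{Prob}(\varphi^{-1}(x))$. Using normality, for every $g\in G$ and $t\in\Lambda$ one has $tg\nu=g(g^{-1}tg)\nu=g\nu$, so $\Lambda$ fixes all of $G\nu$ and, by weak$^*$-continuity of the action, all of the weak$^*$-closure of $G\nu$. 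Since $\nu$ is supported on a single fiber, strong proximality of the extension $\partial_F(G,X)\to X$ places a Dirac measure $\delta_y$ in that closure, and $\delta_y$ is then $\Lambda$-fixed, i.e.\ $ty=y$ for all $t\in\Lambda$. Applying normality once more gives $t(gy)=g(g^{-1}tg)y=gy$ for all $g\in G$, so $\Lambda$ fixes the orbit $Gy$ pointwise; by minimality of $\partial_F(G,X)$ this orbit is dense, and a homeomorphism fixing a dense subset of a Hausdorff space is the identity, whence $\Lambda\subseteq\text{Ker}(G\curvearrowright\partial_F(G,X))$.

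The hard part will be the $\supseteq$ inclusion: upgrading the soft statement that $\Lambda$ acts trivially on $X$ to the rigid statement that it acts trivially on the much larger space $\partial_F(G,X)$. The engine is a careful choreography of four ingredients — amenability (to extract a $\Lambda$-invariant measure on a fiber), normality (to propagate invariance along $G$-orbits, first of the measure and later of the fixed point), strong proximality of the extension (to replace the invariant measure by an invariant point mass), and minimality of $\partial_F(G,X)$ (to pass from a dense orbit of fixed points to the whole space). The point requiring care is that strong proximality is to be invoked precisely for measures supported on one fiber $\varphi^{-1}(x)$ — which is exactly the defining property of a strongly proximal extension — and a fiberwise statement in the spirit of \cite[Theorem 3.2]{Naghavi} is what makes that transition clean.
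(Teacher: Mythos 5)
Your proof is correct and takes essentially the same route as the paper's: equivariance plus Kawabe's amenability of stabilizers for the forward inclusion, and for the reverse inclusion the same chain of amenability (a $\Lambda$-fixed measure on a fiber), normality, strong proximality of the extension (where the paper cites \cite[Theorem 3.2]{Naghavi} for what you extract directly from the defining property of a $(G,X)$-boundary), and minimality. Your explicit propagation of the fixed point along the $G$-orbit via normality spells out the step the paper compresses into ``$t\delta_y=\delta_y$ for some $y$, hence for all $y$''.
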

\begin{cor}\cite[Proposition 2.8]{BKKO}
$\text{Rad}(G)=\text{Ker}(G\curvearrowright\partial_FG)$.
\end{cor}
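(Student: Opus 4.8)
The plan is to obtain this as a direct specialization of Proposition~\thref{amenradandker} to the case where $X$ is a one-point space. Take $X=\{\ast\}$ equipped with the (necessarily trivial) $G$-action. This $X$ is a minimal $G$-space, since the unique orbit $G\ast=\{\ast\}$ is dense in $\{\ast\}$, so the hypotheses of Proposition~\thref{amenradandker} are met. Moreover $C(X)=\mathbb{C}$, and by the identification recalled in Section~2 — namely that $\partial_F(G,X)$ is the spectrum of the $G$-injective envelope $\mathrm{I}_G(C(X))$, together with the fact from \cite{KK} that $\partial_F G$ is the spectrum of $\mathrm{I}_G(\mathbb{C})$ — we get $\partial_F(G,\{\ast\})=\partial_F G$ as $G$-spaces.

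Next, since the action $G\curvearrowright\{\ast\}$ is trivial, every element of $G$ fixes $\ast$, so $\text{Ker}(G\curvearrowright\{\ast\})=G$. Substituting $X=\{\ast\}$ into the conclusion of Proposition~\thref{amenradandker} then gives
\[
\text{Ker}(G\curvearrowright\partial_F G)=\text{Ker}(G\curvearrowright\{\ast\})\cap\text{Rad}(G)=G\cap\text{Rad}(G)=\text{Rad}(G),
\]
which is exactly the asserted equality, recovering \cite[Proposition 2.8]{BKKO}.

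There is no real obstacle here beyond bookkeeping: the only step that warrants a moment's attention is the identification $\partial_F(G,\{\ast\})\cong\partial_F G$, and this is immediate once one unwinds the definition of the generalized Furstenberg boundary through injective envelopes, since $C(\{\ast\})=\mathbb{C}$ forces $\mathrm{I}_G(C(\{\ast\}))=\mathrm{I}_G(\mathbb{C})$. Thus the corollary follows with no further work.
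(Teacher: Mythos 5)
Your proof is correct and is exactly the derivation the paper intends: the corollary is stated as an immediate specialization of \thref{amenradandker} to the one-point space, where $\partial_F(G,\{\ast\})=\partial_F G$ and $\text{Ker}(G\curvearrowright\{\ast\})=G$. Nothing further is needed.
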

\begin{cor}
Suppose that $G\curvearrowright X$ is minimal. Assume that $\text{Rad}(G)=\{e\}$. Then, $\text{Ker}(G\curvearrowright\partial_F(G,X))=\{e\}$.
\end{cor}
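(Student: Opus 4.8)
The plan is immediate, since this is a direct specialization of Proposition~\thref{amenradandker}. Because $G\curvearrowright X$ is assumed minimal, that proposition gives
\[
\text{Ker}(G\curvearrowright\partial_F(G,X)) = \text{Ker}(G\curvearrowright X)\cap \text{Rad}(G).
\]
Substituting the hypothesis $\text{Rad}(G)=\{e\}$ into the right-hand side forces $\text{Ker}(G\curvearrowright\partial_F(G,X))\subseteq\{e\}$, and since the identity element of $G$ fixes every point of $\partial_F(G,X)$ the reverse inclusion is automatic, so the kernel is trivial. There is no genuine obstacle here: all of the content sits in Proposition~\thref{amenradandker}.

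If one prefers a self-contained argument, I would instead reproduce only the first half of the proof of that proposition: for any $x\in\partial_F(G,X)$ the stabilizer $G_x$ is amenable by \cite[Proposition 3.3]{Kawabe}, hence $\text{Ker}(G\curvearrowright\partial_F(G,X))$, being a subgroup of $G_x$, is amenable; being also normal in $G$, it is contained in $\text{Rad}(G)=\{e\}$, and equality follows as above. Either route works, and neither presents any difficulty beyond invoking the amenability of point stabilizers on $\partial_F(G,X)$.
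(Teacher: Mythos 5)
Your proposal is correct and matches the paper's intent exactly: the corollary is stated there without proof precisely because it is an immediate specialization of Proposition \thref{amenradandker}, obtained by setting $\text{Rad}(G)=\{e\}$ in the identity $\text{Ker}(G\curvearrowright\partial_F(G,X))=\text{Ker}(G\curvearrowright X)\cap \text{Rad}(G)$. Your alternative self-contained route via amenability of stabilizers and normality of the kernel is also sound, being just the first half of the proposition's own argument.
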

\noindent
\begin{prop} \thlabel{2.1}
	Let $X$ be a minimal $G$-space and $t \not\in \text{Ker}(G\curvearrowright\partial_F(G,X))$. Then
	$$0\in \overline{\{\mu\lambda_{t} : \mu\in \pr_{f}(G, C(X))\}}.$$
\end{prop}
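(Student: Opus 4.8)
The plan is to reduce the statement to an averaging fact on the boundary $\partial_F(G,X)$ and then transport it down to the crossed product. First I would recall the standard dictionary: since $t \notin \ker(G\curvearrowright\partial_F(G,X))$, there is a point $z_0\in\partial_F(G,X)$ with $t z_0 \neq z_0$. Using that $\partial_F(G,X)$ is a boundary (hence minimal and strongly proximal) and that $t$ moves $z_0$, one can separate $z_0$ from $t^{-1}z_0$ by an open set and then, by strong proximality applied to suitable measures, arrange a sequence $g_n\in G$ so that the translates $g_n^{-1}\{z_0, t^{-1}z_0\}$ are ``spread out.'' Concretely, I want to build, for any $\varepsilon>0$ and any prescribed $N$, elements $g_1,\dots,g_N\in G$ together with a partition of unity $\{f_1,\dots,f_N\}\subseteq C(X)$ (pulled back from, or adapted to, the boundary) such that the measure $\mu = \sum_{i} f_i \Delta_{g_i^{-1}}\,\cdot\,(\text{something}) \in \pr_f(G,C(X))$ has the property that $\mu\lambda_t$ is, on each piece of the partition, a translate $\lambda_{g_i^{-1} t g_i}$ times a scalar-type coefficient, and the group elements $g_i^{-1} t g_i$ are pairwise distinct and distinct from $e$. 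This is exactly the Powers-type combinatorial kneading, now carried out $C(X)$-equivariantly.

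The second step is the norm estimate. Once the $N$ conjugates $s_i := g_i^{-1} t g_i$ are distinct and $\ne e$, I would compute $\mu\lambda_t$ (for an appropriate $\mu$ built from the $f_i$ and the $g_i$) as a sum $\sum_i h_i \lambda_{s_i}$ with $\sum_i h_i^* h_i \le 1$, and then invoke the orthogonality of the $\lambda_{s_i}$ in $C(X)\rtimes_r G$ — i.e., the fact that $\mathbb{E}(\lambda_{s_i}^* b^* b\, \lambda_{s_j})$ vanishes appropriately for $i\ne j$, or more simply a direct Hilbert-space estimate on $\ell^2(G)\otimes H$ — to conclude $\|\mu\lambda_t\| \le C/\sqrt{N}$ for a universal constant $C$. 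Letting $N\to\infty$ and $\varepsilon\to 0$ then gives $0\in\overline{\{\mu\lambda_t : \mu\in\pr_f(G,C(X))\}}$. Here minimality of $X$ (and of the boundary) is what lets me use translation to make the $g_i^{-1}z_0$ disjointly supported; strong proximality is what lets me contract mass so that the ``error'' coefficients are small.

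I expect the main obstacle to be the first step: producing the conjugating elements $g_i$ so that the conjugates $g_i^{-1}tg_i$ are genuinely distinct (and $\neq e$) while simultaneously controlling the supports of the coefficient functions $f_i\in C(X)$. In the classical ($X=\{\mathrm{pt}\}$) case this is Powers' lemma, using free-group combinatorics or, more generally, the dynamics on $\partial_F G$; in the present relative setting one must run the argument on $\partial_F(G,X)$ and then push the resulting partition of unity down along $\varphi:\partial_F(G,X)\to X$, using that the fibers $\varphi^{-1}(x)$ carry no nontrivial invariant structure except via $\ker(G\curvearrowright X)\cap\Rad(G)$ (cf. \thref{amenradandker}). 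The delicate point is that $f_i$ must live in $C(X)$, not merely in $C(\partial_F(G,X))$, so I would need an approximation argument: first do everything with coefficients in $C(\partial_F(G,X))$, then approximate those coefficients by elements pulled back from $C(X)$ using minimality of $X$ together with a partition-of-unity refinement, absorbing the approximation error into the $1/\sqrt{N}$ bound. Once that bookkeeping is set up, the rest is the standard Powers averaging estimate.
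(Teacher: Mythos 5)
Your plan diverges from the paper's proof (which is a short duality argument: assume $0$ is not in the closure, separate it from the convex set $\{\mu\lambda_t\}$ by a functional $\omega$ with $\operatorname{Re}\omega(\mu\lambda_t)\ge\alpha>0$, and then use the dynamical machinery of \cite[Propositions 3.7 and 3.8]{AU} to produce $\eta\in\overline{\{\omega\mu\}}^{w^*}$ with $\eta(\lambda_t)=0$, a contradiction), and as written it has two genuine gaps. The first is the norm estimate: you claim that once the conjugates $s_i=g_i^{-1}tg_i$ are distinct and $\neq e$, writing $\mu\lambda_t=\sum_i h_i\lambda_{s_i}$ with positive coefficients and invoking ``orthogonality of the $\lambda_{s_i}$'' yields $\|\mu\lambda_t\|\le C/\sqrt{N}$. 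This is false. Orthogonality gives control of the $\ell^2$-coefficient norm, not the operator norm; with coefficients $h_i\ge 0$ summing to $1$ (which is what elements of $\pr_f(G,C(X))$ produce under the action $\mu\cdot a=\sum_s\mu(s)\lambda_sa\lambda_s^*$), the average $\sum_i h_i\lambda_{s_i}$ has norm $1$ whenever $G$ is amenable, no matter how many distinct nontrivial conjugates you take. The classical Powers estimate needs much more than distinctness: one needs a paradoxical/ping-pong configuration (disjoint translates of suitable subsets of $G$, or contracting dynamics on the boundary with coefficient functions supported on the contracted sets), and that is precisely the content that must be supplied, not assumed.

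The second gap is the passage from coefficients in $C(\partial_F(G,X))$ to coefficients in $C(X)$. Under the identification via $\varphi:\partial_F(G,X)\to X$, the functions coming from $C(X)$ are constant on the fibers $\varphi^{-1}(x)$, so they can never be ``approximated'' to play the role of boundary functions that separate points inside a fiber; minimality of $X$ does not help here. This is not a peripheral bookkeeping issue: the essential case of the proposition is $t\in\ker(G\curvearrowright X)\setminus\ker(G\curvearrowright\partial_F(G,X))$, where $y$ and $ty$ lie in the \emph{same} fiber, so your separating open set cannot be chosen $C(X)$-measurably at all. Handling this case is exactly what the relative boundary structure is for (measures supported in a fiber are strongly proximal, so they can be contracted by group elements rather than cut by functions), and it is what the cited arguments of Amrutam--Ursu carry out; your outline does not engage with it, so the reduction to $C(X)$-valued coefficients remains unproved.
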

	\begin{proof}
		Suppose otherwise, which means there exists $\alpha > 0$ and $\omega \in (C(X)\rtimes_r G)^{*}$ such that $\mathrm{Re}\omega(\mu\lambda_{t}) \geq \alpha$ for all $\mu\in\pr_{f}(G, C(X))$. Since $t \notin \text{Ker}(G\curvearrowright\partial_F(G,X))$, there exists $y \in \partial_F(G,X)$ such that $ty \neq y$. By following the proof of \cite[Proposition 3.7]{AU} and \cite[Proposition 3.8]{AU}, we can find $\eta\in \overline{\{\omega\mu : \mu \in \pr_{f}(G, C(X))}\}^{\mathrm{w^*}}$ such that $\eta(\lambda_{t})=0$. Thus $0 \in \overline{\{\omega\mu : \mu \in \pr_{f}(G, C(X))}\}^{\mathrm{w^*}}$. However, this leads to a contradiction.

	\end{proof}

With the above proposition in hand, we are ready to give a generalization of \cite[Theorem 1.3]{BKKO}. Consider the following set which is a subset of $(C(X)\rtimes_{r} G)^*$.
$$S_P^{G}(C(X)\rtimes_r G)=\{\tau: \tau|_{C(X)}=\delta_x \text{ for }x\in X \text{ and }\tau|_{C_r^*(G)}\text{ is a trace}\}.$$

\begin{theorem}
	Let $X$ be a minimal $G$-space. The following are equivalent:
	\begin{itemize}
		\item[$(i)$] $\text{Ker}(G\curvearrowright\partial_F(G,X))=\{e\}$.
		\item[$(ii)$] for all $t\in G\setminus\{e\}$, $0\in \overline{\{\mu\lambda_{t} : \mu\in \pr_{f}(G, C(X))\}}$.
		\item[$(iii)$] every state $\tau\in S_P^{G}(C(X)\rtimes_rG)$ is of the form $\tau=\tau\circ\mathbb{E}$.
	\end{itemize}
	
	\begin{proof}
		$(i)\Rightarrow (ii)$ \thref{2.1}.\\
		
	    $(ii)\Rightarrow (iii)$ Let $\tau\in S_P^{G}(C(X)\rtimes_rG)$. Let $a \in C(X)\rtimes_rG$ and $\epsilon>0$. Then, there are $t_1,t_2,\ldots,t_n \in G\setminus\{e\}$ and $f_1,f_2,\ldots,f_n \in C(X)$ such that
	 	\[a\approx_{\epsilon}\sum_{i=1}^nf_i\lambda_{t_i}+\mathbb{E}(a)\]
		Since $0\in \overline{\{\mu\lambda_{t_{i}} : \mu\in \pr_{f}(G, C(X))\}}$ for each $i=1,2,\ldots,n$, we can find $s^{t_i}_1,s^{t_i}_2,\ldots,s^{t_i}_{m_i}\in G$ and $f^{t_i}_1,f^{t_i}_2,\ldots,f^{t_i}_{m_i}\in C(X)$ with $f^{t_i}_j\ge 0$, $\sum_{j=1}^{m_i}f^{t_i}_{j}=1$ such that
		\begin{equation}
		\label{avg}
		\left\|\sum_{j=1}^{m_i}f_j^{t_i}\lambda_{s^{t_i}_{j}}\lambda_{t_i}\lambda_{{s^{t_i}_{j}}^{-1}}\right\|<\frac{\epsilon}{\|f_i\|}.
		\end{equation}
		Now,
		\begin{align*}
		\tau\left(f_i\lambda_{t_i}\right)&=f_i(x)\tau(\lambda_{t_i})\\&=f_i(x)\tau\left(\sum_{j=1}^{m_i}f_j^{t_i}\lambda_{t_i}\right)&\text{$\left(\sum_{j=1}^{m_i}f^{t_i}_{j}=1\right)$}\\&=f_i(x)\tau\left(\sum_{j=1}^{m_i}f_j^{t_i}\lambda_{s^{t_i}_{j}}\lambda_{t_i}\lambda_{{s^{t_i}_{j}}^{-1}}\right)&\text{$(\tau|_{C_r^*(G)}$ is $G$-invariant)}\\
		\end{align*}
		Therefore, taking norm on both sides, we obtain that
		\begin{align*}
		\left|\tau\left(f_i\lambda_{t_i}\right)\right|&\le |f_i(x)|\left|\tau\left(\sum_{j=1}^{m_i}f_j^{t_i}\lambda_{s^{t_i}_{j}}\lambda_{t_i}\lambda_{{s^{t_i}_{j}}^{-1}}\right)\right|\\&\le |f_i(x)|\left\|\sum_{j=1}^{m_i}f_j^{t_i}\lambda_{s^{t_i}_{j}}\lambda_{t_i}\lambda_{{s^{t_i}_{j}}^{-1}}\right\|\\&\le |f_i(x)|\frac{\epsilon}{\|f_i\|}&\text{(Using equation \ref{avg})}\\&<\epsilon.
		\end{align*}
		Since $\epsilon>0$ is arbitrary, we see that $\tau(f_i\lambda_{t_i})=0$ for each $i=1,2,\ldots,n$. Hence,
		\[\tau(a)\approx_{\epsilon}\tau\left(\sum_{i=1}^nf_i\lambda_{t_i}+\mathbb{E}(a)\right)\approx_{\epsilon}\tau(\mathbb{E}(a))\]
		Again, since $\epsilon>0$ is arbitrary, we see that $\tau(a)=\tau(\mathbb{E}(a))$. \\
		
		$(iii)\Rightarrow (i)$ Suppose $\text{Ker}(G\curvearrowright\partial_F(G,X))$ is a non-trivial subgroup. By referring to \thref{amenradandker}, let's define $\Lambda$ as the intersection of the kernel of the action $G\curvearrowright X$ and the radical subgroup $\text{Rad}(G)$. We can conclude that $\Lambda$ is a non-trivial, amenable, and normal subgroup of $G$. Consequently, it fixes $\nu \in \text{Prob}(\varphi^{-1}(x))$, where $\varphi: \partial_F(G,X)\to X$ represents the corresponding factor map. It's worth noting that $\nu|_{C(X)}=\delta_x$. Additionally, we observe that $\Lambda$ is a normal, amenable subgroup of $G_x$. Let $\mathbb{E}_{\Lambda}$ be the canonical conditional expectation from $C_r^*(G_x)$ onto $C_r^*(\Lambda)$, $\tau_0$ be the unit character on $C_r^*(\Lambda)$, and $\mathbb{E}_x$ be the canonical conditional expectation from $C(X)\rtimes_rG$ onto $C_r^*(G_x)$. In this context, we find that $\tau_0\circ\mathbb{E}_{\Lambda}\circ\mathbb{E}_x=\tau$ is a state on $C(X)\rtimes_rG$ whose restriction to $C(X)$ is $\delta_x$. It's important to note that $\tau|_{C_r^*(G)}$ is $G$-invariant. For any $t\in G$, if $tst^{-1} \in \Lambda$ holds, then $s \in \Lambda$. Consequently, $\tau(\lambda_{tst^{-1}})=\tau(\lambda_s)=0$ or $\tau(\lambda_{tst^{-1}})=\tau(\lambda_s)=1$, depending on whether $s \in \Lambda$ or not. Thus, we can conclude that $\tau$ is $G$-invariant. It's worth mentioning that $\tau\ne\tau\circ\mathbb{E}$, as $\tau(\lambda_s)=1$ for $e\ne s\in\Lambda$.
	\end{proof}
\end{theorem}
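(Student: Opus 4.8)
The plan is to prove the three implications $(i)\Rightarrow(ii)$, $(ii)\Rightarrow(iii)$, and $(iii)\Rightarrow(i)$ in cyclic order. The first implication is already available: it is precisely \thref{2.1}, once we observe that the hypothesis $\mathrm{Ker}(G\curvearrowright\partial_F(G,X))=\{e\}$ means every $t\in G\setminus\{e\}$ lies outside the kernel.

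For $(ii)\Rightarrow(iii)$, I would fix $\tau\in S_P^G(C(X)\rtimes_r G)$, say $\tau|_{C(X)}=\delta_x$, and aim to show $\tau=\tau\circ\mathbb{E}$. Since finite sums $\sum_i f_i\lambda_{t_i}+\mathbb{E}(a)$ are norm-dense in $C(X)\rtimes_r G$ and $\tau$ is norm-continuous, it suffices to show $\tau(f\lambda_t)=0$ for every $f\in C(X)$ and every $t\neq e$. The key computation uses two facts about $\tau$: first, $\tau(f\lambda_t)=f(x)\,\tau(\lambda_t)$ because $\tau|_{C(X)}=\delta_x$ is multiplicative on $C(X)$ and $C(X)$ sits in the multiplicative domain (more carefully, one uses that $\delta_x$ is a character, so $\tau(fb)=f(x)\tau(b)$ for $b$ in the crossed product — this is the standard fact that a pure state on a $C^*$-subalgebra acting as a character extends to a module map); second, $\tau|_{C_r^*(G)}$ being a trace forces it to be $G$-invariant under conjugation, i.e. $\tau(\lambda_{s}\lambda_t\lambda_{s}^{-1})=\tau(\lambda_t)$. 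Given $(ii)$, choose $s_j^t\in G$ and $f_j^t\geq0$ in $C(X)$ with $\sum_j f_j^t=1$ making $\|\sum_j f_j^t\lambda_{s_j^t}\lambda_t\lambda_{(s_j^t)^{-1}}\|$ as small as desired; then
$$\tau(\lambda_t)=\tau\Big(\sum_j f_j^t\lambda_t\Big)=\tau\Big(\sum_j f_j^t\,\lambda_{s_j^t}\lambda_t\lambda_{(s_j^t)^{-1}}\Big),$$
and bounding the right side by the operator norm shows $\tau(\lambda_t)=0$, hence $\tau(f\lambda_t)=0$.

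For $(iii)\Rightarrow(i)$, I argue by contraposition: assume $\mathrm{Ker}(G\curvearrowright\partial_F(G,X))\neq\{e\}$ and produce a state in $S_P^G(C(X)\rtimes_r G)$ violating $\tau=\tau\circ\mathbb{E}$. By \thref{amenradandker} the kernel equals $\Lambda:=\mathrm{Ker}(G\curvearrowright X)\cap\mathrm{Rad}(G)$, so $\Lambda$ is a nontrivial amenable normal subgroup of $G$. Fixing $x\in X$, amenability of $\Lambda$ (which also lies in the stabilizer $G_x$ for any $x\in\partial_F(G,X)$, up to the identification via $\varphi$) gives a $\Lambda$-fixed $\nu\in\mathrm{Prob}(\varphi^{-1}(x))$ with $\nu|_{C(X)}=\delta_x$. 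I would then build the state $\tau:=\tau_0\circ\mathbb{E}_\Lambda\circ\mathbb{E}_x$ by composing: the conditional expectation $\mathbb{E}_x\colon C(X)\rtimes_r G\to C_r^*(G_x)$, the conditional expectation $\mathbb{E}_\Lambda\colon C_r^*(G_x)\to C_r^*(\Lambda)$, and the trivial character $\tau_0$ on the amenable group's $C^*$-algebra $C_r^*(\Lambda)$. This $\tau$ restricts to $\delta_x$ on $C(X)$; its restriction to $C_r^*(G)$ sends $\lambda_s$ to $1$ if $s\in\Lambda$ and $0$ otherwise, and normality of $\Lambda$ ($tst^{-1}\in\Lambda\iff s\in\Lambda$) shows this is conjugation-invariant, hence a trace on $C_r^*(G)$; so $\tau\in S_P^G$. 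But $\tau(\lambda_s)=1\neq0=\tau(\mathbb{E}(\lambda_s))$ for any $e\neq s\in\Lambda$, so $\tau\neq\tau\circ\mathbb{E}$, contradicting $(iii)$.

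The main obstacle is the $(iii)\Rightarrow(i)$ direction, specifically the construction and verification of the state $\tau$. One must check that $\mathbb{E}_x$ and $\mathbb{E}_\Lambda$ exist and are genuine conditional expectations (the first comes from the $\nu$-fixing and the inclusion $C_r^*(G_x)\subseteq C(X)\rtimes_r G$ coming from the $G_x$-action on the fiber, or rather from restricting the natural expectation; the second is the standard conditional expectation associated to the subgroup inclusion $\Lambda\leq G_x$), that their composition with the character $\tau_0$ is a state on the crossed product, and — most delicately — that the resulting functional on $C_r^*(G)$ is actually $G$-conjugation invariant and thus lies in $S_P^G$. The first implication $(i)\Rightarrow(ii)$ is essentially free given \thref{2.1}, and the arithmetic in $(ii)\Rightarrow(iii)$ is routine once the module property $\tau(fb)=f(x)\tau(b)$ is in hand.
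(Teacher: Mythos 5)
Your proposal is correct and follows essentially the same route as the paper: $(i)\Rightarrow(ii)$ via \thref{2.1}, $(ii)\Rightarrow(iii)$ by the same density-plus-averaging computation using the character property of $\tau|_{C(X)}$ and the trace (conjugation-invariance) property of $\tau|_{C_r^*(G)}$, and $(iii)\Rightarrow(i)$ by the same contrapositive construction $\tau=\tau_0\circ\mathbb{E}_\Lambda\circ\mathbb{E}_x$ built from $\Lambda=\mathrm{Ker}(G\curvearrowright X)\cap\mathrm{Rad}(G)$ via \thref{amenradandker}. The only cosmetic difference is that you deduce $\tau(\lambda_t)=0$ directly instead of estimating $|\tau(f_i\lambda_{t_i})|<\epsilon$, which does not change the argument.
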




We refer the readers to \cite{U} for more information about trace property of noncommutative crossed products.

\end{document}